\renewcommand{\baselinestretch}{1.1}
\renewcommand{\geq}{\geqslant}
\renewcommand{\leq}{\leqslant}
\newcommand{\N}{\mathbb{N}}
\newcommand{\half}{\ensuremath{\protect\tfrac{1}{2}}}
\newcommand{\quarter}{\ensuremath{\protect\tfrac{1}{4}}}
\newcommand{\third}{\ensuremath{\protect\tfrac{1}{3}}}
\newcommand{\FLOOR}[1]{\ensuremath{\protect\left\lfloor#1\right\rfloor}}
\newcommand{\doi}[1]{\href{http://dx.doi.org/#1}{\texttt{doi:#1}}}
\newcommand{\arXiv}[1]{\href{http://arxiv.org/abs/#1}{\texttt{arXiv:#1}}}
\newcommand{\urlprefix}{}
\theoremstyle{plain}
\newtheorem{theorem}{Theorem}
\newtheorem{lemma}[theorem]{Lemma}
\begin{document}

\title{{\bf The exact chromatic number of \\the convex segment    disjointness graph}\footnote{A preliminary version of this paper, which proved the lower bound in \cref{main}, was presented at the XIV Spanish Meeting on Computational Geometry (EGC 2011) and was published in the associated Hurtado Festschrift,   \emph{Lecture Notes in Computer Science} 7579:79--84, Springer, 2012. }\\[2ex]
{\normalsize\it In memory of Ferran Hurtado}}
  
\author{
Ruy Fabila-Monroy\footnote{Departamento de Matem\'aticas, Centro de Investigaci\'on y Estudios Avanzados del Instituto Polit\'ecnico Nacional, M\'exico, D.F., M\'exico    (\texttt{ruyfabila@math.cinvestav.edu.mx}).}  
    \and 
    Jakob   Jonsson\footnote{ \texttt{njakobj@gmail.com}. Research supported by the Swedish Research Council (grant 2006-3279).}  
    \and Pavel  Valtr\footnote{Department of Applied Mathematics, Charles University, Prague, Czech Republic (\texttt{valtr@kam.mff.cuni.cz}).}  
    \and 
    David R. Wood\footnote{School of Mathematical Sciences, Monash  University, Melbourne, Australia   (\texttt{david.wood@monash.edu}). }
    }

%Department of Mathematics, KTH--Royal Institute of Technology, Stockholm, Sweden  (\texttt{jakobj@kth.se}). 
    
\maketitle

\begin{abstract}
Let $P$ be a set of $n$ points in strictly convex position in the plane.  Let $D_n$ be the graph whose vertex set is the set of all
line segments with endpoints in $P$, where disjoint segments are adjacent.  The chromatic number of this graph was first studied by
Araujo,  Dumitrescu, Hurtado, Noy, and Urrutia~[2005] and then by Dujmovi\'c and Wood~[2007].  Improving on their estimates, we prove the following exact formula:
$$\chi(D_n) = n - \left\lfloor \sqrt{2n +    \tfrac{1}{4}} - \tfrac{1}{2}\right\rfloor.$$
\end{abstract}

\section{Introduction}

Throughout this paper, $P$ is a set of $n$ points in strictly convex position in the plane.  The \emph{convex segment disjointness graph}, denoted by $D_n$, is the graph whose vertex set is the set of all line segments with endpoints in $P$, where two vertices are adjacent if the corresponding segments are disjoint. Obviously $D_n$ does not depend on the choice of $P$. Now assume that $P$ consists of $n$ evenly spaced points on a unit circle in the plane.  The graph $D_n$ was introduced by Araujo, Dumitrescu, Hurtado, Noy and Urrutia~\cite{ADHNU05}, who proved the following bounds on the
chromatic number of $D_n$:
\begin{equation*}
  \label{eqn:ADHNU}
  2\FLOOR{\third(n+1)}-1\leq \chi(D_n) <n-\half\FLOOR{\log n}\enspace.
\end{equation*}
Both bounds were improved by Dujmovi\'c and Wood~\cite{Antithickness} to
\begin{equation*}
  \tfrac{3}{4}(n-2)\leq \chi(D_n) 
  <n-\sqrt{\half n}-\half(\ln n)+4\enspace.
\end{equation*}
%\citet{FW12} improved the lower bound to
%\begin{equation*}
%n-\sqrt{2n+\tfrac14}-\tfrac12\leq \chi(D_n) \enspace.
%\end{equation*}
In this paper we prove matching upper and lower bounds, thus concluding the following exact formula for $\chi(D_n)$.
\begin{theorem}
\label{main}
$$\chi(D_n)=n-\FLOOR{\sqrt{2n+\quarter}-\half}.$$
\end{theorem}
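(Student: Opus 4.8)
The plan is to prove the two inequalities $\chi(D_n)\le n-k$ and $\chi(D_n)\ge n-k$ separately, where a short computation identifies $k:=\FLOOR{\sqrt{2n+\quarter}-\half}$ as the largest integer with $\binom{k+1}{2}\le n$; equivalently $\binom{k+1}{2}\le n\le\binom{k+2}{2}-1$. Throughout I use the reformulation that a colour class of a proper colouring of $D_n$ is an independent set, i.e.\ a family of pairwise non-disjoint segments; call such a family \emph{intersecting}. Thus $\chi(D_n)$ is the least number of intersecting families into which the $\binom n2$ segments split. Two standing facts about an intersecting family $F$ will be used: (i) drawn on the convex polygon, $F$ is a straight-line thrackle, so $|F|\le n$ (Hopf--Pannwitz/Erd\H{o}s); (ii) if $F$ contains a polygon side $\{p,p'\}$ then, since a side crosses no segment, every member of $F$ meets $\{p,p'\}$ at $p$ or $p'$, so $F$ lies inside the ``double star'' on $\{p,p'\}$ — in particular $F$ has at most two polygon sides, which are then consecutive.

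\textbf{Upper bound.} I would construct an explicit partition of the $\binom n2$ segments into $n-k$ intersecting families, refining the trivial ``star colouring'' in which class $i$ ($1\le i\le n-1$) is the set of all segments incident to $p_i$. The weakness of this colouring is that its last classes are tiny (class $i$ has only $n-i$ segments), and the remedy is to let classes absorb extra crossing segments: one checks that the star at $p_i$ can be augmented by segments joining the two neighbours of $p_i$ to earlier vertices while remaining intersecting. Redistributing segments along these lines one can empty out a ``staircase'' of classes whose original sizes are essentially $1,2,\dots,k$, so that the savings accumulate to exactly $k$ precisely when $1+2+\dots+k=\binom{k+1}2\le n$, the stated threshold. (An equivalent route is induction on $n$: the $n-1$ segments gained when a point is added form a single star, so $\chi(D_{n})\le\chi(D_{n-1})+1$; one then shows that whenever $n$ reaches one of the values $\binom{j+1}2$ the new star can instead be dissolved into suitably reorganised old classes, so $\chi$ does not increase at that step.)

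\textbf{Lower bound.} This is the subtler direction. Suppose the $\binom n2$ segments are partitioned into $c$ intersecting families $F_1,\dots,F_c$; I want $c\ge n-k$, equivalently $\binom{n-c+1}2\le n$. The crude estimate $c\ge\binom n2/n=(n-1)/2$ from (i), and the bound $c\ge\lceil n/2\rceil$ obtained from (ii) by counting polygon sides, are both too weak, so a global argument is needed. The plan is to establish a structural dichotomy for intersecting families — roughly, that a family $F$ with $|F|$ close to $n$ must be ``hub-concentrated'', all but $O(1)$ of its members passing through one or two fixed vertices (this is where the thrackle structure, sharpened near equality in (i), does the work) — and then to run a counting/discharging argument over the incidences between vertices and colours: a hub-concentrated family is efficient but useful to only $O(1)$ of the $n$ vertices, whereas a family spread over many vertices is necessarily small, and balancing these two effects forces $(n-c)(n-c+1)\le 2n$.

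The main obstacle is the lower bound, specifically making the trade-off in that last step quantitatively exact: it is not hard to see heuristically that $\chi(D_n)=n-\Theta(\sqrt n)$, but pinning the constant so as to land precisely on the triangular-number threshold $\binom{k+1}2\le n$, rather than on an inequality weaker by an additive constant, requires the structural statement about intersecting families to be tight. On the upper-bound side the analogous delicate point is the bookkeeping: one must organise the redistribution so that the colours saved number exactly $k$ and every resulting family is genuinely intersecting.
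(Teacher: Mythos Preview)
Your proposal is a plan with an acknowledged gap, not a proof. For the lower bound you sketch a dichotomy between ``hub-concentrated'' and ``spread'' intersecting families followed by a discharging argument; as you yourself say, this is the kind of reasoning that gives $\chi(D_n)=n-\Theta(\sqrt n)$ but is very hard to push to the exact triangular-number threshold, and you do not do so. The paper's lower bound rests on a completely different idea that bypasses this difficulty. A maximal convex thrackle $T$ is an odd cycle $C(T)$ plus pendant edges; the key lemma is that if $T_1,T_2$ are maximal convex thrackles with $V(C(T_1))\cap V(C(T_2))=\emptyset$, then they share an edge, moreover one with an endpoint in each $V(C(T_i))$. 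An induction (splitting any vertex lying on two of the odd cycles) then shows that the union of any $c$ maximal convex thrackles on $n$ points has at most $cn-\binom{c}{2}$ edges --- the $\binom{c}{2}$ accounts for the pairwise shared edges, which are \emph{distinct} thanks to the endpoint clause. A proper $c$-colouring of $D_n$ thus gives $\binom{n}{2}\le cn-\binom{c}{2}$, i.e.\ $(n-c)(n-c+1)\le 2n$, exactly the inequality you wanted. No dichotomy or discharging is needed; what makes the bound tight is this pairwise-overlap lemma, not any near-extremal structure theorem for a single thrackle.

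For the upper bound your star-redistribution idea is plausible in spirit but not carried out, and the bookkeeping you flag is real. The paper takes a different route: it encodes the segment $ab$ ($a<b$) as the lattice point $(a,b)$ in the staircase region $\Omega_n=\{(i,j):1\le i<j\le n\}$, observes that maximal independent sets of $D_n$ are exactly monotone lattice paths from $(1,r)$ to $(r,n)$, and writes down an explicit family of such paths (one for each $i\le n$ not of the form $\binom{j}{2}$) that together cover $\Omega_n$ with exactly $n-k$ paths.
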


Equivalently, $\chi(D_n) = n - k$, where $k$ is the unique integer satisfying $\binom{k+1}{2} \leq n < \binom{k+2}{2}$. 

\cref{main} is trivial for $n\leq 2$, so we henceforth assume that $n\geq 3$. The proof of the lower bound in \cref{main} is based on the observation that each colour class in a colouring of $D_n$ is a convex thrackle. We then prove that two maximal convex thrackles must share an edge in common. From this we prove a tight upper bound on the number of edges in the union of $k$ convex thrackles. \cref{main} quickly follows. These results are presented in \cref{LowerBound}. The proof of the upper bound in \cref{main} is given by an explicit colouring, which we describe in \cref{UpperBound}.

\section{Proof of Lower Bound}
\label{LowerBound}

A \emph{convex thrackle} on $P$ is a geometric graph with vertex set
$P$ such that every pair of edges intersect; that is, they have a
common endpoint or they cross. Observe that a geometric graph $H$ on
$P$ is a convex thrackle if and only if $E(H)$ forms an independent
set in $D_n$. A convex thrackle is \emph{maximal} if it is
edge-maximal.  As illustrated in \cref{fig:thrackle}, it is
well known and easily proved that every maximal convex thrackle $T$
consists of an odd cycle $C(T)$ together with some degree $1$ vertices
adjacent to vertices of $C(T)$. 
For each vertex $v$ in $C(T)$, let $W_T(v)$ be the convex wedge with apex $v$, such that the boundary
rays of $W_T(v)$ contain the neighbours of $v$ in $C(T)$. Then every
degree-1 vertex $u$ of $T$ lies in a unique wedge and the apex of this
wedge is the only neighbour of $u$ in $T$; see \citep[Lemma~1]{CN10} for a strengthening of these observations. 
See \cite{CN-DCG00,RST16,AS17,PP08,Cottingham93,GX17,PRT12,CKN15,FP11,PS11,GR95,PRS-DM94,CN09,FS35,HopfPann34,LPS-DCG97,Woodall-Thrackles,CMN04} for more on thrackles in general. 
Note that it is immediate from the above observations that every convex thrackle $T$ satisfies $|E(T)|\leq |V(T)|$. 
Conways's famous thrackle conjecture says this property holds for all thrackles. 
Note that $C(T)$ is an example of a \emph{musquash} \citep{CK-DM01,GN18}. 

\begin{figure}[htb]
  \begin{center}
    \includegraphics{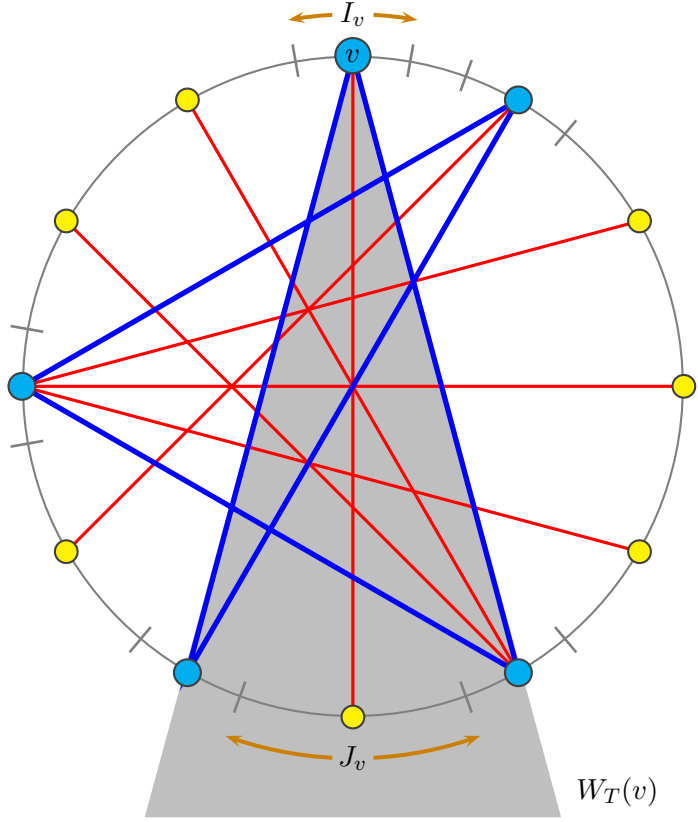}
  \end{center}
  \caption{A maximal convex thrackle $T$ with cycle $C(T)$ shown in blue. \label{fig:thrackle}}
\end{figure}

%\begin{figure}[htb]
%  \begin{center}
%    \includegraphics[width=\textwidth]{convex_thrackle-2.eps}
%  \end{center}
%  \caption{(a) maximal convex thrackle, (b) the intervals pairs
%    $(I_u,J_u)$ \label{fig:thrackle}}
%\end{figure}

The following lemma is the heart of the proof of the lower bound in \cref{main}. We therefore include two proofs. 

\begin{lemma}
\label{common_edge}
  Let $T_1$ and $T_2$ be maximal convex thrackles on $P$.  
  Let $C_1 := V(C(T_1))$ and $C_2 := V(C(T_2))$. 
  Assume that $C_1\cap C_2=\emptyset$.  
  Then there is an edge in $T_1\cap T_2$, with one endpoint in $C_1$ and one endpoint in $C_2$.
\end{lemma}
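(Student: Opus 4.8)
The plan is to reduce the lemma to a short combinatorial fact about perfect matchings. Because $C_1\cap C_2=\emptyset$ and $T_1$ is maximal, the structural description of maximal convex thrackles forces every vertex of $C_2$ to be a degree-$1$ vertex of $T_1$; let $\phi(v)\in C_1$ denote the unique neighbour of $v\in C_2$ in $T_1$ (the apex of the wedge of $T_1$ containing $v$). Symmetrically, each $w\in C_1$ is a degree-$1$ vertex of $T_2$ with unique $T_2$-neighbour $\psi(w)\in C_2$. Then $\{v,\phi(v)\}\in E(T_1)$ for every $v\in C_2$ and $\{w,\psi(w)\}\in E(T_2)$ for every $w\in C_1$, so it suffices to produce $v\in C_2$ and $w\in C_1$ with $\phi(v)=w$ and $\psi(w)=v$; the edge $\{v,w\}$ is then the one we want.

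To find such a pair I would consider the directed graph on $C_1\cup C_2$ with an arc $v\to\phi(v)$ for each $v\in C_2$ and an arc $w\to\psi(w)$ for each $w\in C_1$. Every vertex has out-degree $1$, so this digraph contains a directed cycle with distinct vertices; as all arcs run between $C_1$ and $C_2$, the cycle alternates and has even length $2\ell$, say $v_0\to w_0\to v_1\to\cdots\to v_{\ell-1}\to w_{\ell-1}\to v_0$ with $v_i\in C_2$, $w_i\in C_1$, $\phi(v_i)=w_i$, and $\psi(w_i)=v_{i+1}$ (cyclically). If $\ell=1$ we are done. So assume $\ell\geq2$ and seek a contradiction. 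The $2\ell$ points $v_0,\dots,v_{\ell-1},w_0,\dots,w_{\ell-1}$ are distinct and in convex position. The $\ell$ chords $\{v_i,w_i\}$ are edges of $T_1$, hence pairwise crossing or sharing an endpoint; since they form a perfect matching of these $2\ell$ points, they pairwise cross. For the same reason the $\ell$ chords $\{w_i,v_{i+1}\}$ are edges of $T_2$ that form a perfect matching of the same $2\ell$ points, pairwise crossing.

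The crux is then the claim that \emph{a set of $2\ell\geq4$ points in convex position admits exactly one perfect matching in which every two chords cross}, namely the antipodal matching joining the $i$-th point to the $(i+\ell)$-th; this follows quickly by noting that any chord of such a matching must separate the remaining $2\ell-2$ points into two sets of equal size. Granting this, the $T_1$-matching and the $T_2$-matching coincide, so each $\{v_i,w_i\}$ equals some $\{w_j,v_{j+1}\}$; comparing endpoints and using $C_1\cap C_2=\emptyset$ gives $v_i=v_{j+1}$ and $w_i=w_j$, whence $i\equiv j$ and $i\equiv j+1\pmod{\ell}$, impossible for $\ell\geq2$. Hence $\ell=1$, proving the lemma. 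I expect the only genuine obstacle to be establishing the uniqueness of the all-crossing perfect matching (together with the easy verification that the directed cycle is simple, so that its $2\ell$ endpoints really are distinct); the rest is bookkeeping with the attachment maps $\phi$ and $\psi$.
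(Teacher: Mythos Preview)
Your argument is correct and is essentially the paper's own combinatorial proof: the paper builds the same bipartite digraph on $C_1\cup C_2$ via the attachment maps, finds an alternating directed cycle, and observes that its two colour classes are matching thrackles on the same convex point set, hence coincide, forcing a $2$-cycle. You have merely spelled out in more detail the two points the paper leaves implicit---the uniqueness of the all-crossing perfect matching and the modular contradiction that rules out $\ell\geq 2$.
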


\begin{proof}[Combinatorial Proof of \cref{common_edge}] 
Define a directed bipartite multigraph $H$ with bipartition $\{C_1,C_2\}$ as follows. 
For each vertex $u\in C_1$, add a \emph{blue} arc $uv$ to $H$, where $v$ is the unique vertex in $C_2$ for which $u\in W_{T_1}(v)$. 
Similarly,  for each vertex $u\in C_2$, add a \emph{red} arc $uv$ to $H$, where $v$ is the unique vertex in $C_1$ for which $u\in W_{T_2}(v)$. 
Since $C_1\cap C_2=\emptyset$, every vertex of $H$ has outdegree $1$. 
Thus $H$ contains a directed  cycle $\Gamma$. 
By construction, vertices in $H$ are not incident to an incoming and an outgoing edge of the same colour. Thus $\Gamma$  alternates between blue and red arcs. The red edges of $\Gamma$ form  a matching as well as the blue edges, both of which are  thrackles on the same set of points (namely, $V(\Gamma)$). However, there is only one matching thrackle on a set of points in convex position.  Therefore $\Gamma$ is a 2-cycle, which corresponds to an edge in $T_1\cap T_2$, with one endpoint in $C_1$ and one endpoint in $C_2$.
\end{proof}

% A continuous function $f:S^1\longrightarrow \mathbb{R}$ is an
% \emph{antipodal mapping} with respect to a $\mathbb{Z}_2$-action
% $\nu$ if $f(x)=-f(\nu(x))$ for all $x\in S^1$.

Our second proof of \cref{common_edge} depends on the following topological notions. Let $S^1$ be the unit circle. 
For points $x,y\in S^1$, let $\overrightarrow{xy}$ be the clockwise  arc from $x$ to $y$ in $S^1$.  
A \emph{$\mathbb{Z}_2$-action} on $S^1$ is a homeomorphism $f:S^1\rightarrow S^1$ such that $f(f(x))=x$ for all
$x\in S^1$. Say that $f$ is \emph{free} if $f(x)\neq x$ for all $x\in S^1$.  

\begin{lemma}  
\label{fg}
If $f$ and $g$ are free $\mathbb{Z}_2$-actions of $S^1$, then $f(x)=g(x)$ for some point $x \in S^1$. 
  \end{lemma}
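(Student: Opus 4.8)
The plan is to pass to the universal cover $\mathbb{R}\to S^1$ and compare monotone lifts of $f$ and $g$. First I would observe that a free $\mathbb{Z}_2$-action must preserve orientation: an orientation-reversing homeomorphism of $S^1$ lifts to a decreasing map $\widetilde h\colon\mathbb{R}\to\mathbb{R}$ with $\widetilde h(t+1)=\widetilde h(t)-1$, so $t\mapsto \widetilde h(t)-t$ is continuous and decreases by $2$ under $t\mapsto t+1$; hence it tends to $\mp\infty$ as $t\to\pm\infty$ and vanishes somewhere, giving a fixed point of $h$ and contradicting freeness. Therefore $f$ and $g$ lift to increasing homeomorphisms $F,G\colon\mathbb{R}\to\mathbb{R}$ that commute with integer translations.

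Next I would normalise the lifts. Writing $F=\mathrm{id}+\alpha$, the function $\alpha$ is continuous and $1$-periodic, and freeness of $f$ says that $\alpha$ never takes an integer value, so by the intermediate value theorem $\alpha$ takes all its values in a single interval $(m,m+1)$. Replacing $F$ by $F-m$ I may assume $\alpha(t)\in(0,1)$ for all $t$; then $F^2=\mathrm{id}+1$ (because $F^2(t)-t=\alpha(t)+\alpha(F(t))$ is an integer lying in $(0,2)$), and consequently $F^{-1}=F-1$. Doing the same for $g$ gives $G=\mathrm{id}+\beta$ with $\beta(t)\in(0,1)$ and $G^{-1}=G-1$. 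The conclusion $f(x)=g(x)$ for some $x\in S^1$ is then equivalent to $\alpha(t)-\beta(t)\in\mathbb{Z}$ for some $t\in\mathbb{R}$, and since $\alpha(t)-\beta(t)\in(-1,1)$ this is the same as $\alpha(t)=\beta(t)$ for some $t$.

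So I would argue by contradiction: suppose $\alpha(t)\neq\beta(t)$ for every $t$. Since $\alpha-\beta$ is continuous, nowhere zero and $1$-periodic, it has constant sign, so after possibly swapping the roles of $f$ and $g$ I may assume $F(t)>G(t)$ for all $t$. Evaluating this inequality at $t=F^{-1}(s)$ gives $s>G(F^{-1}(s))$ for all $s$, and applying the increasing map $G^{-1}$ yields $G^{-1}(s)>F^{-1}(s)$, that is, $G(s)-1>F(s)-1$, i.e.\ $G(s)>F(s)$ for all $s$ — contradicting $F>G$. Hence $\alpha(t)=\beta(t)$, equivalently $f(x)=g(x)$, for some point $x\in S^1$.

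I expect the one genuinely delicate point to be the normalisation in the second paragraph: it is precisely freeness that allows the lifts to be chosen so that $F-\mathrm{id}$ and $G-\mathrm{id}$ both take values in $(0,1)$, and this uniform two-sided control is exactly what makes the final sign comparison go through (note that in this form $F^{-1}=F-1$, $G^{-1}=G-1$). Everything else is the intermediate value theorem together with routine covering-space bookkeeping.
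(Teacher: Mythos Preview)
Your argument is correct. The orientation check in the first paragraph is sound, the normalisation to $\alpha,\beta\in(0,1)$ is legitimate (freeness is exactly what forces $F-\mathrm{id}$ to miss the integers, and $1$-periodicity plus the intermediate value theorem traps it in a single unit interval), the deduction $F^2=\mathrm{id}+1$ and hence $F^{-1}=F-1$ is clean, and the final contradiction is a nice symmetry trick: from $F>G$ one gets $G^{-1}>F^{-1}$, which via $F^{-1}=F-1$, $G^{-1}=G-1$ flips back to $G>F$.

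This is a genuinely different route from the paper's. The paper argues directly on $S^1$: fixing a base point $x_0$ with (say) $x_0,g(x_0),f(x_0)$ in clockwise order, it parametrises the arc $\overrightarrow{x_0\,g(x_0)}$ and tracks where $f\circ p$ and $g\circ p$ land, showing that if they never coincide then the clockwise order $p(t),g(p(t)),f(p(t))$ persists and eventually forces $f(x_0)$ into the wrong arc. Your approach trades this hands-on arc bookkeeping for covering-space machinery: once lifted and normalised, the involution identity $F^{-1}=F-1$ makes the contradiction essentially algebraic. The paper's proof is shorter and self-contained, while yours isolates more clearly \emph{why} the result holds --- it is really a rotation-number statement (both lifts have rotation number $\tfrac12$, and two such lifts must cross) --- and would generalise more readily, e.g.\ to free $\mathbb{Z}_p$-actions.
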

  
  \begin{proof}
Let $x_0\in S^1$. If $f(x_0)=g(x_0)$
  then we are done. Now assume that $f(x_0)\neq g(x_0)$. Without loss
  of generality, $x_0,g(x_0),f(x_0)$ appear in this clockwise order
  around $S^1$.  Parameterise $\overrightarrow{x_0g(x_0)}$ with a
  continuous injective function $p:[0,1]\rightarrow
  \overrightarrow{x_0g(x_0)}$, such that $p(0)=x_0$ and
  $p(1)=g(x_0)$. Assume that $g(p(t))\neq f(p(t))$ for all
  $t\in[0,1]$, otherwise we are done. Since $g$ is free, $p(t)\neq
  g(p(t))$ for all $t\in[0,1]$. Thus
  $g(p([0,1]))=\overrightarrow{g(p(0))g(p(1))}=\overrightarrow{g(x_0)x_0}$.
  Also $f(p([0,1]))=\overrightarrow{f(x_0)f(p(1))}$, as otherwise
  $g(p(t))= f(p(t))$ for some $t\in[0,1]$.  This implies that
  $p(t),g(p(t)),f(p(t))$ appear in this clockwise order around
  $S^1$. In particular, with $t=1$, we have $f(p(1))\in
  \overrightarrow{x_0g(x_0)}$. Thus
  $x_0\in\overrightarrow{f(x_0)f(p(1))}$.  Hence $x_0=f(p(t))$ for
  some $t\in[0,1]$.  Since $f$ is a $\mathbb{Z}_2$-action,
  $f(x_0)=p(t)$. This is a contradiction since
  $p(t)\in\overrightarrow{x_0g(x_0)}$ but
  $f(x_0)\not\in\overrightarrow{x_0g(x_0)}$.
  \end{proof}

\begin{proof}[Topological Proof of \cref{common_edge}] 
Assume that $P$ lies on $S^1$.  Let $T$ be a maximal convex thrackle
on $P$.  As illustrated in \cref{fig:thrackle}, for each
vertex $u$ in $C(T)$, let $(I_u,J_u)$ be a pair of closed intervals of
$S^1$ defined as follows. Interval $I_u$ contains $u$ and bounded by
the points of $S^1$ that are $\frac13$ of the way towards the first points
of $P$ in the clockwise and anticlockwise direction from $u$.  Let $v$
and $w$ be the neighbours of $u$ in $C(T)$, so that $v$ is before $w$
in the clockwise direction from $u$. Let $p$ be the endpoint of $I_v$
in the clockwise direction from $v$. Let $q$ be the endpoint of $I_w$
in the anticlockwise direction from $w$. Then $J_u$ is the interval
bounded by $p$ and $q$ and not containing $u$.  Define $f_T:S^1
\longrightarrow S^1$ as follows. For each $v\in C(T)$, map the
anticlockwise endpoint of $I_v$ to the anticlockwise endpoint of
$J_v$, map the clockwise endpoint of $I_v$ to the clockwise endpoint
of $J_v$, and extend $f_T$ linearly for the interior points of $I_v$
and $J_v$, such that $f_T(I_v)=J_v$ and $f_T(J_v)=I_v$. Since the
intervals $I_v$ and $J_v$ are disjoint, $f_T$ is a free
$\mathbb{Z}_2$-action of $S^1$.

By \cref{fg}, there exists $x \in S^1$ such that
  $f_{T_1}(x)=y=f_{T_2}(x)$.  Let $u\in C_1$ and $v \in C_2$ so
  that $x \in I_u \cup J_u$ and $x \in I_v \cup J_v$, where
  $(I_u,J_u)$ and $(I_v,J_v)$ are defined with respect to $T_1$ and
  $T_2$ respectively. Since $C_1\cap C_2=\emptyset$, we have
  $u\neq v$ and $I_u\cap I_v=\emptyset$. Thus $x\not\in I_u\cap
  I_v$. If $x\in J_u\cap J_v$ then $y\in I_u\cap I_v$, implying
  $u=v$. Thus $x\not\in J_u\cap J_v$.  Hence $x\in(I_u\cap
  J_v)\cup(J_u\cap I_v)$.  Without loss of generality, $x\in I_u\cap
  J_v$. Thus $y\in J_u\cap I_v$. If $I_u\cap J_v=\{x\}$ then $x$ is an
  endpoint of both $I_u$ and $J_v$, implying $u\in C_2$, which is a
  contradiction. Thus $I_u\cap J_v$ contains points other than $x$. It
  follows that $I_u\subset J_v$ and $I_v\subset J_u$.  Therefore the
  edge $uv$ is in both $T_1$ and $T_2$. Moreover one endpoint of $uv$
  is in $C_1$ and one endpoint is in $C_2$.
\end{proof}

\begin{theorem} 
\label{overcount} 
For every set $P$ of $n$ points  in strictly convex position, the union of $k$ maximal convex
  thrackles on $P$ has at most $k n-\binom{k}{2}$ edges.
\end{theorem}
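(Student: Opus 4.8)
The plan is to prove \cref{overcount} by induction on $n+k$. Every maximal convex thrackle $T$ consists of an odd cycle together with pendant edges covering all of $P$, so $|E(T)|=|V(T)|=n$; this settles $k\leq 1$ and reduces the inductive step (for $k\geq 2$) to the following claim: among any maximal convex thrackles $T_1,\dots,T_k$, at least one of them --- relabel it $T_k$ --- shares at least $k-1$ distinct edges with $E(T_1)\cup\cdots\cup E(T_{k-1})$. Granting the claim, the inductive hypothesis bounds $|E(T_1)\cup\cdots\cup E(T_{k-1})|$ by $(k-1)n-\binom{k-1}{2}$, and $T_k$ contributes at most $n-(k-1)$ new edges, so the union has at most $kn-\binom{k}{2}$ edges.

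When the cyclic vertex sets $C(T_1),\dots,C(T_k)$ are pairwise disjoint, the claim is easy: pick $T_k$ arbitrarily, and for each $i<k$ let $e_i\in E(T_i)\cap E(T_k)$ be the edge given by \cref{common_edge}, with one endpoint in $C(T_i)$ and one in $C(T_k)$. These edges are pairwise distinct: each $e_i$ has exactly one endpoint outside $C(T_k)$, that endpoint lies in $C(T_i)$, and disjointness of the cycles then identifies $i$. Hence $T_k$ shares $k-1$ distinct edges with the rest.

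Two reductions bring the general case closer to this one. First, if some point $v\in P$ lies in none of the cycles $C(T_i)$, then $v$ has degree $1$ in every $T_i$, so at most $k$ edges of the union are incident to $v$; deleting $v$, extending each $T_i-v$ to a maximal convex thrackle on $P\setminus\{v\}$, and invoking the inductive hypothesis gives $|E(T_1)\cup\cdots\cup E(T_k)|\leq k(n-1)-\binom{k}{2}+k=kn-\binom{k}{2}$. So we may assume $C(T_1)\cup\cdots\cup C(T_k)=P$; and in the easiest remaining subcase the cycles actually partition $P$, so the pairwise-disjoint argument above applies verbatim.

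The genuinely hard case --- the cycles cover $P$ but some of them overlap --- is where I expect the bulk of the work to go, and it is the main obstacle: \cref{common_edge} says nothing about a pair of thrackles whose cycles meet, and even for pairs to which it does apply, the edges it produces can coincide across different pairs (two cycles sharing a vertex can force $e_i=e_j$ in the argument above). My intended line of attack is to replay the combinatorial mechanism behind \cref{common_edge} for all $k$ thrackles at once --- a single functional digraph, built around a carefully chosen designated thrackle $T_k$ (for instance one whose cycle is inclusion-minimal, or meets the fewest others), whose directed $2$-cycles certify edges shared by $T_k$ with the other thrackles --- and then to show that $k-1$ distinct such edges can always be read off. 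Keeping the coincidences forced by overlapping cycles under control is the delicate point.
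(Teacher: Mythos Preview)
Your proposal correctly identifies and proves the base case (pairwise disjoint cycles), and your first reduction---deleting a point covered by no cycle---is sound. But the ``genuinely hard case'' is not proved; you sketch an intended attack and explicitly flag the delicate point without resolving it. That is a real gap: your entire inductive scheme hinges on the claim that some $T_k$ shares at least $k-1$ distinct edges with the union of the others, and for overlapping cycles you have given no argument for this. \cref{common_edge} is silent when two cycles meet, and even the edges it does produce can collide when cycles share vertices, exactly as you note. There is no reason to expect a single functional digraph built ``around a designated $T_k$'' to yield $k-1$ distinct $2$-cycles in general, and you offer no mechanism for choosing $T_k$ that would force this.

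The paper avoids this difficulty altogether by inducting on a different quantity: the number of triples $(v,i,j)$ with $v\in C(T_i)\cap C(T_j)$. When this count is zero the cycles are pairwise disjoint and your base-case argument applies verbatim, giving $\binom{k}{2}$ distinct shared edges. When some $v$ lies in two cycles, the paper \emph{splits} $v$ into two consecutive points $v',v''$, routing one of the two offending cycles through $v'$ and the other through $v''$, and patching every thrackle with exactly one new edge. This produces $k$ maximal thrackles on $n+1$ points with strictly fewer overlap triples; by induction their union has at most $k(n+1)-\binom{k}{2}$ edges, and subtracting the $k$ added edges gives the bound for the original configuration. Note that $n$ \emph{increases} in this step, so the reduction does not fit into your induction on $n+k$; the point is that the overlap count goes down, and that is the right parameter to induct on.
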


\begin{proof}
  For a set $\mathcal{T}=\{T_1,\dots,T_k\}$ of $k$ maximal convex thrackles on $P$,
  define $C_i:=V(C(T_i))$ for $i\in[1,k]$, and let
$r(\mathcal{T})$ be the set of triples $(v,i,j)$ where $v \in C_i \cap C_j$ and $1\leq i<j\leq k$. 
The proof proceeds by induction on $|r(\mathcal{T})|$.

First suppose that $r(\mathcal{T})=\emptyset$.  Thus $C_i \cap C_j=\emptyset$ for all distinct $T_i,T_j\in \mathcal{T}$.  By \cref{common_edge}, $T_i$ and $T_j$ have an edge in common, with one endpoint in $C_i$ and one endpoint in $C_j$.  Hence distinct pairs of thrackles have distinct edges in common.  Since every maximal convex thrackle has $n$ edges and we overcount at least one edge for every pair, the total number of edges is at most $kn-\binom{k}{2}$.

Now assume that $r(\mathcal{T}) \neq\emptyset$. Thus there is a vertex $v$ and a pair of distinct thrackles $T_i$ and $T_j$, such that $v \in C_i \cap C_j$. We now modify $\mathcal{T}$ to create a new set $\mathcal{T'}$ of $k$  convex thrackles, as illustrated in \cref{split}. First, replace $v$ by two consecutive vertices $v'$ and $v''$ on $P$. Then, for each cycle $C_\ell$ with $v\in C_\ell$ and $\ell\neq j$ (which includes $C_i$), replace $v$ by $v'$ in $T_\ell$, and add the edge $xv''$ to $T_\ell$, where $x$ is the vertex in $C_\ell$ for which $v''$ is inserted into $W_{T_\ell}(x)$. Now, replace $v$ by $v''$ in $T_j$, and add the edge $yv'$ to $T_j$, where $y$ is the vertex in $C_j$ for which $v'$ is inserted into $W_{T_\ell}(y)$. Finally, for each cycle $C_a$ with $v\not\in C_a$, 
if $z$ is the vertex in $C_a$ with $v\in W_{T_a}(z)$, then replace the edge $zv$ by $zv'$ and $zv''$ in $T_a$. 
Let $\mathcal{T}'$ be the resulting set of thrackles.  Then $(v,i,j)\not\in r(\mathcal{T}')$, and every element of $r(\mathcal{T}')$ arises from an element of $r(\mathcal{T})$ (replacing $v$ by $v'$ or $v''$, as appropriate). Thus $r(\mathcal{T'}) \leq r(\mathcal{T})-1$. 
Since one edge is added to each thrackle, the number of edges in $\mathcal{T'}$ equals the number of edges in $\mathcal{T}$ plus $k$. By induction, $\mathcal{T}'$ has at most $k (n+1)-\binom{k}{2}$ edges, implying $\mathcal{T}$ has at most $k n -\binom{k}{2}$ edges.
\end{proof}

\begin{figure}
  \begin{center}
    \includegraphics{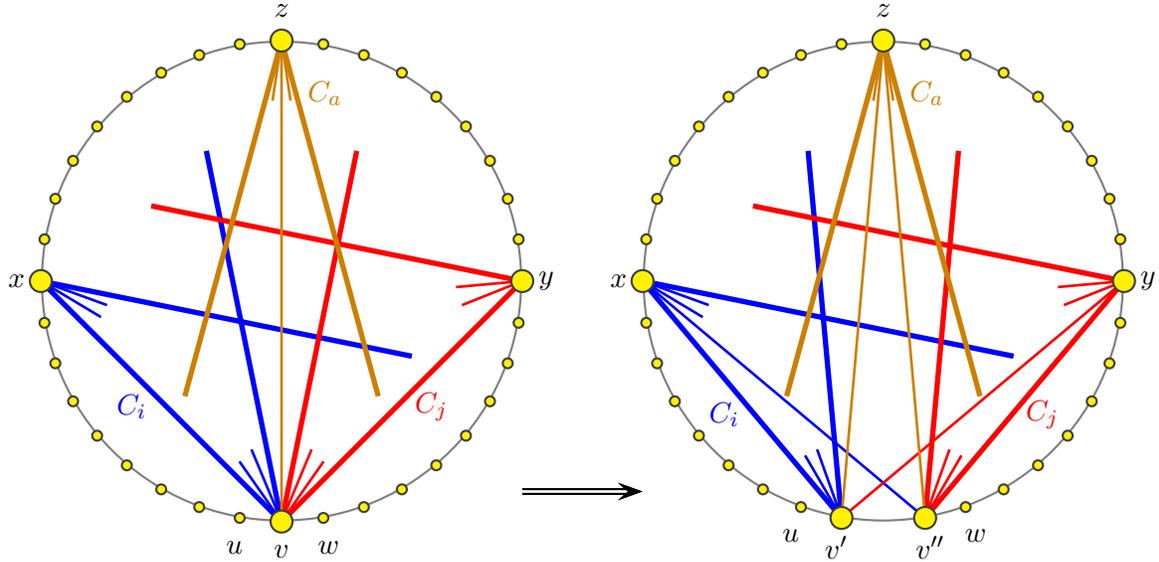}
  \end{center}
  \caption{\label{split} Construction in the proof of \cref{overcount}.}
\end{figure}

In the language of \citet{Antithickness}, \cref{overcount}
says that every $n$-vertex graph with convex antithickness $k$ has at
most $kn-\binom{k}{2}$ edges.

We now show that \cref{overcount} is best possible for all
$n\geq 2k$. Let $S$ be a set of $k$ vertices in $P$ with no two
consecutive vertices in $S$. If $v\in S$ and $x,v,y$ are consecutive
in this order in $P$, then $T_v:=\{vw:w\in
P\setminus\{v\})\}\cup\{xy\}$ is a maximal convex thrackle, and
$\{T_v:v\in S\}$ has exactly $k n-\binom{k}{2}$ edges in total.

\begin{proof}[Proof of Lower Bound in \cref{main}]
  If $\chi(D_n)=k$ then, there are $k$ convex thrackles whose union is
  the complete geometric graph on $P$.  Possibly add edges to obtain
  $k$ maximal convex thrackles with $\binom{n}{2}$ edges in total.  By
  \cref{overcount}, $\binom{n}{2}\leq kn-\binom{k}{2}$.
  The quadratic formula implies the result.
\end{proof}

\section{Proof of Upper Bound}
\label{UpperBound}

Label the points of $P$ by $1,2,\dots,n$ in clockwise order. Denote by $ab$ the line segment between points $a,b\in P$ with $a<b$, which is a vertex of $D_n$. 
%
% Define
%\[
%\Omega_n = \{ (i,j) : 1 \leq i < j \leq n\}.
%\]
%To each element $ij$ in $\Omega_n$, we associate the closed line
%segment ${\bf s}_{i,j}$ between the corners in $P_n$ labelled $i$ and
%$j$.  We define $D_n$ to be the graph on the vertex set $\Omega_n$
%with the property that two vertices $ij$ and $k\ell$ are adjacent if
%and only if the corresponding line segments ${\bf s}_{i,j}$ and ${\bf
%  s}_{k,\ell}$ are disjoint.
It will be  convenient to adopt the matrix convention for indexing rows and columns in $\mathbb{Z}^2$. That is, row $a$ is immediately below row $a-1$, column $b$ is immediately to the right of column $b-1$, and $(a,b)$ refers to the lattice point in row $a$ and column $b$. Identify the vertex $ab$ of $D_n$ with the lattice point $(a,b)$ where $a<b$, which we represent as a unit square in our figures. Define $\Omega_n = \{ (i,j)\in\mathbb{Z}^2 : 1 \leq i < j \leq n\}$. We may consider $V(D_n)=\Omega_n$ represented as a triangle-shaped polyomino as illustrated in \cref{fig:10}(a).

\begin{figure}[htb]
\centering
%(a) \includegraphics{Figs/size10path} 
(a) \includegraphics{size10path} 
\hspace*{10mm}
(b)  \includegraphics{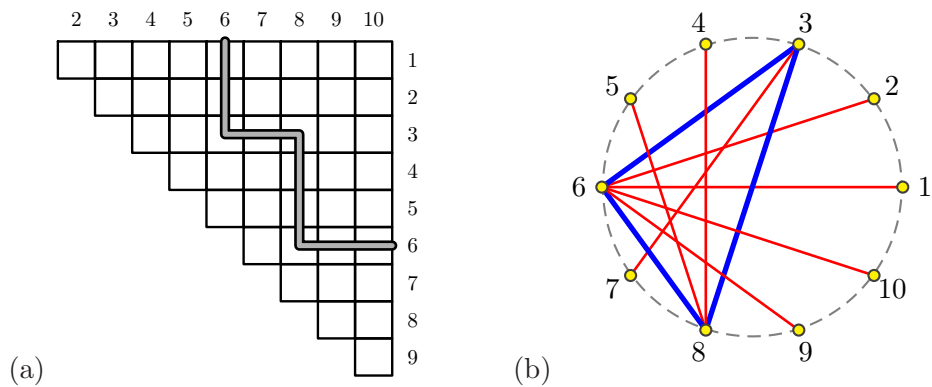}
\caption{(a) A maximal independent set in $D_{10}$ represented as a path in the polyomino $\Omega_{10}$. (b) The corresponding maximal convex thrackle $T$.  Turning points in the path correspond to vertices in $C(T)$. 
\label{fig:10}}
\end{figure}

%A \emph{polyomino} is a finite subset of $\mathbb{Z}^2$. 

Now, two distinct vertices $(a,b)$ and $(c,d)$ in $D_n$ are adjacent if and only if $a \leq c \leq b \leq d$ or $c \leq a \leq d \leq b$. In particular, for $(a,b)$ and $(c,d)$ to be non-adjacent, $(c,d)$ must lie in the nonshaded region in \cref{fig:17}. In particular, $(c,d)$ cannot be strictly southwest or strictly northeast of $(a,b)$. Moreover, $\max \{a,c\} \leq \min\{b,d\}$.

\begin{figure}[htb]
  \centering
    \includegraphics{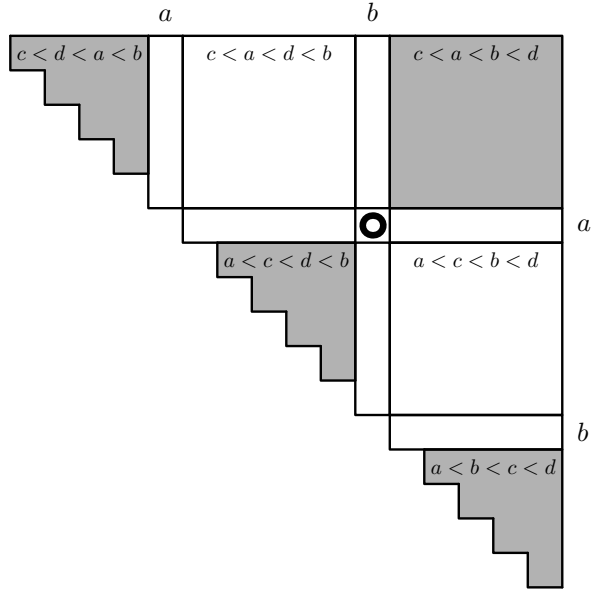}
  \caption{An element $(c,d)$ is adjacent to $(a,b)$ (marked with a
    thick circle) in the graph $D_n$ if and only if $(c,d)$ belongs to
    one of the shaded regions.
  \label{fig:17}}
\end{figure}

We conclude that every independent set $S$ of $D_n$ is a subset of some rectangle of the form $[1,r] \times [r,n]$ (with the southwest corner $rr$ removed). Namely, choose $(a,b), (c,d) \in S$ such that $a$ is maximal and $d$ is minimal.  Then $a' \leq a \leq d \leq b'$ for each $(a',b') \in S$.  In fact, it is straightforward to show that each maximal independent set forms a path from $(1,r)$ to $(r,n)$ for some $r \in\{2, \ldots, n-1\}$, where each step in the path is of the form $(i,j) \rightarrow (i,j+1)$ or $(i,j) \rightarrow (i+1,j)$.  An example is given in \cref{fig:10}(a).  Conversely, every such path is a maximal independent set.  We refer to such a path as a \emph{maximal thrackle path}; the corresponding set of line segments forms a maximal convex thrackle, as shown in \cref{fig:10}(b). 

\begin{figure}[!htb]
  \centering
    \includegraphics{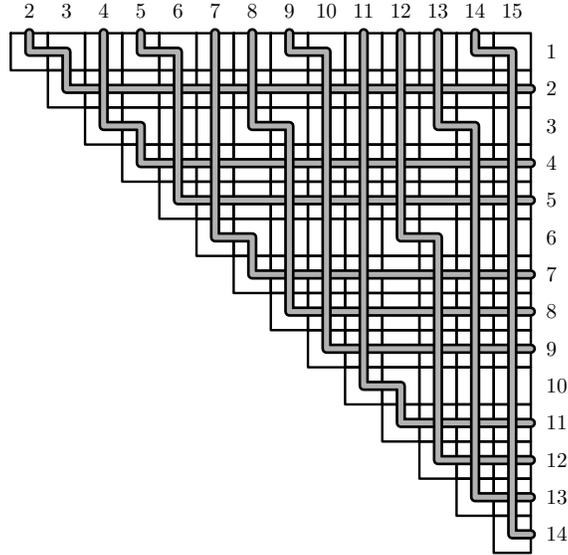}
  \caption{Ten thrackle paths covering $\Omega_{15}$.  
  \label{fig:15}
  %Note that the     first three paths cover $\Omega_6$ and that the first six paths     cover $\Omega_{10}$.
  }
\end{figure}

To summarize, the chromatic number of $D_n$ equals the minimum number of maximal thrackle paths that cover $\Omega_n$.  For example, \cref{fig:15} shows that it is possible to cover 
$\Omega_{15}$ with ten thrackle paths. As a consequence, $\chi(D_{15}) \leq 10$. Indeed, we have equality by the lower bound in \cref{main}.

For $k\ge1$, define the following intervals: 
$$\N_k:=[\tbinom{k}{2} + 1 , \tbinom{k+1}{2}] \quad\text{and}\quad \N'_k:=[ \tbinom{k}{2}+1 ,\tbinom{k+1}{2}-1].$$
Thus, $\N_1=\{1\}$, $\N_2=\{2,3\}$, $\N_3=\{4,5,6\}$, etc.
The sets $\N_k$ form a partition of $\N$. Observe that $|\N_k|=k$ and $|\N'_k|=k-1$ for each $k\geq 1$.

We now describe an infinite sequence of infinite paths covering
the infinite polyomino  $\Omega = \{ (i,j)\in\mathbb{Z}^2 : 1 \leq i < j \}$. 
The final construction for $\Omega_n$ is then 
obtained as a restriction of the covering to the set $\Omega_n$.
For each $k\ge2$ and for each $i\in \N'_k$, let $P_i$ be the following path: start at 
 $(1,i)$, walk south to $$\left(\binom{\binom{k+1}{2}-i+1}{2},i\right),$$
make one step east to 
$$\left(\binom{\binom{k+1}{2}-i+1}{2},i+1\right),$$
then walk south to $(i,i+1)$, and finally walk east through all the points in the $i$-th row. % to $(i,n)$.

We now show that for each $j>1$, the paths $P_1,\dots,P_j$ cover all the points
in the $j$-th column. Let $j\in \N_k$. If $j=\binom{k}{2}+1$ then the path
$P_j$ covers the $j$-th column. If $j=\binom{k+1}{2}$ then the 
path $P_{j-1}$ covers the $j$-th column. Now assume that $j\ne\binom{k}{2}+1$
and $j\ne\binom{k+1}{2}$. Let $\ell:=\binom{k+1}{2}-j$. The path $P_j$ covers the topmost
$\binom{\ell+1}{2}$ points in the $j$-th column. The next
$\ell$ points of the $j$-th column lie in the rows 
$\binom{\ell+1}{2}+1,\dots,\binom{\ell+2}{2}-1$. These rows are completely covered
by the $\ell$ paths $P_h$ where $h\in\N'_{\ell+1}$. 
The remaining bottom part of the $j$-th column from $(\binom{\ell+2}{2},j)$
to $(j-1,j)$ is covered by $P_{j-1}$.

Now consider the restriction of the paths $P_1,\dots,P_n$ to the triangular polyomino $\Omega_n$. Each intersection $P_i\cap\Omega_n$ is a maximal thrackle path in $\Omega_n$. Let $k$ be the unique integer satisfying $\binom{k+1}{2} \leq n < {k+2\choose2}$. Then the above construction gives a covering of the polyomino $\Omega_n$ by $n-k$ thrackle paths, since a path $P_i$ exists for each $i\leq n$, except for the $k$ values $i=\binom{2}{2},\binom{3}{2},\dots,\binom{k+1}{2}$. The upper bound in \cref{main} follows.

\def\soft#1{\leavevmode\setbox0=\hbox{h}\dimen7=\ht0\advance \dimen7
  by-1ex\relax\if t#1\relax\rlap{\raise.6\dimen7
  \hbox{\kern.3ex\char'47}}#1\relax\else\if T#1\relax
  \rlap{\raise.5\dimen7\hbox{\kern1.3ex\char'47}}#1\relax \else\if
  d#1\relax\rlap{\raise.5\dimen7\hbox{\kern.9ex \char'47}}#1\relax\else\if
  D#1\relax\rlap{\raise.5\dimen7 \hbox{\kern1.4ex\char'47}}#1\relax\else\if
  l#1\relax \rlap{\raise.5\dimen7\hbox{\kern.4ex\char'47}}#1\relax \else\if
  L#1\relax\rlap{\raise.5\dimen7\hbox{\kern.7ex
  \char'47}}#1\relax\else\message{accent \string\soft \space #1 not
  defined!}#1\relax\fi\fi\fi\fi\fi\fi}

\end{document}